\newtheorem{theorem}{Theorem}[section]
\newtheorem{lemma}[theorem]{Lemma}
\theoremstyle{definition}
\newtheorem{definition}[theorem]{Definition}
\theoremstyle{remark}
\numberwithin{equation}{section}
\begin{document}

\title[Prime numbers for the parallelogram equation]{
Prime numbers as a uniqueness set  \\
of the parallelogram equation \\
via the Goldbach's conjecture}

\author{Hee Chul Pak*}
\address{Department of Mathematics,
         Dankook University, 119 Dandae-ro, Dongnam-gu, Cheonan, Chungnam, 31116,  Republic of Korea}
\email{hpak@dankook.ac.kr}
\thanks{*Correspondence: Hee Chul Pak, hpak@dankook.ac.kr}

\author{Dongseung Kang}
\address{Department of Mathematics Education,
         Dankook University, 152 Jukjeon, Suji, Yongin, Gyeonggi, 16890, Republic of Korea}
\email{dskang@dankook.ac.kr}

\begin{abstract}
Multiplicative arithmetic functions $f$
satisfying the  parallelogram functional equation on prime numbers:
\begin{equation*}
f(p+q)+f(p-q)=2f(p)+2f(q)
\qquad
 \mbox{for all primes\,\, } p,\,q  \; (p \geq q)
\end{equation*}
are investigated.
It is derived that
the unique solution
is the quadratic function
$f(n)=n^2$
for all $n \in \mathbb{N}$
by the Goldbach's conjecture.
\end{abstract}

\maketitle

{\small
\noindent {Mathematics  Subject Classification (2010):}
11A25, 11A41, 11N05, 11P32

\noindent {Keywords:}
Parallelogram functional equation, prime numbers, arithmetic functions, \\
\indent \hspace{.43in} Goldbach's conjecture
}

\section{Introduction}\label{Intro}

We consider the following parallelogram  functional equation: for $m \geq n$
\begin{equation}\label{eqn1}   
f(m+n)+f(m-n) = 2f(m) + 2f(n)
\end{equation}  
for arithmetic functions $f: \mathbb{N} \to \mathbb{C}$.
In  (\ref{eqn1}),
  we set that for $m = n$
\begin{equation}  \label{cond-1}
f(m-n) = f(0) = 0.
\end{equation}
The constraint
(\ref{cond-1}) simply
arises from the restricted domain of
arithmetic functions $f: \mathbb{N} \to \mathbb{C}$.
Instead of the requirement (\ref{cond-1}),
we may
replace the arithmetic solutions $f: \mathbb{N} \to \mathbb{C}$ for the
parallelogram equation (\ref{eqn1}) equipped  with (\ref{cond-1})
to
the solutions
$f: \mathbb{Z} \to \mathbb{C}$
 defined on the extended set $\mathbb{Z}$ of the whole integers for
the  equation (\ref{eqn1}) without  (\ref{cond-1}).
From a solution $f: \mathbb{Z} \to \mathbb{C}$,
 we can get an
arithmetic solution
$f|_{\mathbb{N}}: \mathbb{N} \to \mathbb{C}$ by restriction
for the  equation (\ref{eqn1}) with (\ref{cond-1}), and vice versa.
In fact, each solution $f: \mathbb{Z} \to \mathbb{C}$
for the equation  (\ref{eqn1})
possesses the even symmetry:
 $$
 f(-n) = f(n)
 $$
together with the fact $f(0) = 0$.
Therefore
the problem corresponding to the solutions $f: \mathbb{N} \to \mathbb{C}$
and
the problem of those $f: \mathbb{Z} \to \mathbb{C}$
are equivalent.

We are interested in finding and classifying the arithmetic functions $f$ satisfying
the function equation $(\ref{eqn1})$ on prime numbers $m$ and $n$.
In order to clarify our result, we first introduce a terminology inspired by \cite{Spiro}.
\begin{definition}   
Let $\mathcal{S}$ be a (partial) collection of arithmetic functions and $E$ be a subset of
the set $\mathbb{N}$ of all natural numbers.
Suppose there is exactly one element $f: \mathbb{N} \to \mathbb{C}$ in $\mathcal{S}$
which satisfies
the the parallelogram function equation (\ref{eqn1}) for all $m,\,n\in E$($m \geq n$). Then
$E$ is said to be  a {\it parallelogram uniqueness set for $S\,.$}
\end{definition}    

For example,  the set $\mathbb{N}$ of all natural numbers is a  parallelogram uniqueness set for
the set $\mathcal{S}_{M}$ of all multiplicative\footnote{The term
{\it multiplicative function} means
$f(1) = 1$ and $f(m n) = f(m)f(n)$ for all coprime integers $m$ and $n$.}
  arithmetic functions.
However, the set $4 \mathbb{N} := \{ 4 n  : n \in \mathbb{N}   \}$
 is not a parallelogram uniqueness set for
 $\mathcal{S}_M$.
Our  main goal is to demonstrate that
the set of all {\it prime numbers} is  a parallelogram uniqueness set for
the collection $\mathcal{S}_M$.

Prime numbers have aroused human curiosity since the beginning of human intellectual history.
One of the most interesting things about prime numbers is
their random distribution on the  line filled with the natural numbers
- even though  it appears to be a pattern on a large scale, they are not fully understood yet.
This is one of the main reasons why  the mathematical analysis on prime numbers are so hard.

Many conjectures have been raised about prime numbers.
Despite of seemingly elementary formulations, many of conjectures on prime numbers
have been unsolved for decades.
 One of the  most  famous conjectures among them is the Goldbach's conjecture,
 which states that every even integer
greater than $2$ can be written as the sum of two primes.

 With the advance of numerical analysis, the Goldbach's conjecture
  has been verified up to the  number $4 \times 10^{18}$
 (and double-checked up to $4 \times 10^{17}$) in the year of 2013.\footnote{
  Tom\'{a}s Oliveira e Silva, Goldbach conjecture verification,
 www.sweet.ua.pt/tos/goldbach.html}
It has been shown by several number theorists that {\it most} even numbers
are expressible as the sum of two primes. More precisely,
 the set of even integers
 that are not the sum of two primes has density zero.

In this  paper we prove that the set of all prime numbers is a parallelogram uniqueness set for
the collection $\mathcal{S}_M$ via the Goldbach's conjecture.
We state our main theorem as follows:
\begin{theorem}\label{main1}   
Let $f$ be a multiplicative arithmetic function.
 If
$f$ satisfies the  parallelogram equation on prime numbers, that is,
\begin{equation}\label{eqn2}
f(p+q)+f(p-q)=2f(p)+2f(q) \mbox{\,\,\,\,\, for all primes\,\, } p \geq q\,,
\end{equation}
then $f$ is the quadratic function  $f(n)=n^2$ for all $n \in \mathbb{N}$.
\end{theorem}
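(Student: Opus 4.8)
The plan is to pin down $f$ on the small primes first and then propagate the identity $f(n)=n^2$ to all of $\mathbb{N}$ by strong induction, using the Goldbach decomposition of even numbers (and of $2n$ for odd $n$) together with multiplicativity. The opening move is to set $p=q$ in $(\ref{eqn2})$: with the convention $(\ref{cond-1})$ this gives $f(2p)=4f(p)$ for every prime $p$, and for an odd prime $p$ multiplicativity turns it into $(f(2)-4)f(p)=0$. Hence either $f(2)=4$, or $f$ vanishes at every odd prime; the latter is impossible, since then $(p,q)=(3,2)$ in $(\ref{eqn2})$ forces $f(2)=\tfrac12$ and $(p,q)=(5,2)$ gives the contradiction $0=2f(5)+2f(2)=1$. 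So $f(2)=4$, and therefore $f(4)=4f(2)=16$.

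Next I would substitute the pairs $(3,2)$, $(5,2)$, $(7,5)$ into $(\ref{eqn2})$, using $f(2)=4$, $f(4)=16$ and $f(12)=f(4)f(3)=16f(3)$; the resulting linear system has the unique solution $f(3)=9$, $f(5)=25$, $f(7)=49$, and a few more instances (for example $8=5+3$ gives $f(8)=64$ and $9=7+2$ gives $f(9)=81$) establish $f(n)=n^2$ for all small $n$, which serves as the base of the induction. For the inductive step, fix $n$ and assume $f(m)=m^2$ for all $m<n$. If $n$ is not a prime power, write $n=ab$ with $\gcd(a,b)=1$ and $1<a\le b<n$; then $f(n)=f(a)f(b)=a^2b^2=n^2$. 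If $n$ is even, Goldbach supplies primes $p\ge q$ with $p+q=n$, so $p,q\le n-2$ and $p-q<n$, and $(\ref{eqn2})$ gives
\[
f(n)=2f(p)+2f(q)-f(p-q)=2p^2+2q^2-(p-q)^2=(p+q)^2=n^2 .
\]

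If $n=p$ is an odd prime, I would apply $(\ref{eqn2})$ to $p$ and a small auxiliary prime $q\in\{3,5\}$ chosen so that $p+q$ is not a prime power. Such a $q$ exists, because $p+3$ and $p+5$ are even, so if both were prime powers they would be powers of $2$ differing by $2$, which is impossible for $p\ge 7$ (the primes $p\le 5$ are base cases). For this $q$ one has $p-q<p$, while $p+q$, not being a prime power, is a product of two coprime integers each at most $(p+q)/2<p$; hence $f(p-q)=(p-q)^2$ and $f(p+q)=(p+q)^2$ by the inductive hypothesis and multiplicativity, and $(\ref{eqn2})$ collapses to $2f(p)=f(p+q)+f(p-q)-2f(q)=2p^2$, so $f(p)=p^2$.

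The remaining case, $n=p^k$ with $p$ odd and $k\ge 2$, is the heart of the argument: such an $n$ is odd and composite, so it cannot itself appear as one of the primes in $(\ref{eqn2})$, and it can be reached only as $p'\pm 2$ for a prime $p'$ or through multiplicativity. When $p^k-2$ is prime, $(p,q)=(p^k-2,2)$ in $(\ref{eqn2})$ gives $f(p^k)=2f(p^k-2)+8-f(p^k-4)=(p^k)^2$ directly from the inductive hypothesis. In general I would instead pass to $2p^k$: since $\gcd(2,p^k)=1$ we have $f(2p^k)=4f(p^k)$, and a Goldbach decomposition $2p^k=r+s$ turns $(\ref{eqn2})$ into $4f(p^k)=2f(r)+2f(s)-f(r-s)$, so that $f(p^k)=\tfrac14(r+s)^2=(p^k)^2$ once $f$ is known at $r$, $s$ and $r-s$. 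The difficulty, which I expect to be the main obstacle of the whole proof, is that the larger summand $r$ is a prime strictly between $p^k$ and $2p^k$, and $r-s$ may also exceed $p^k$, so these values are not handed over by the naive induction hypothesis. The remedy is the same reduction used for odd primes above: any integer that is not a prime power breaks up, by multiplicativity, into strictly smaller coprime factors, so $f$ at such a number is determined by its values below. Choosing the Goldbach decomposition $2p^k=r+s$ and the auxiliary primes in the evaluation of $f(r)$ to steer around $p^k$ itself and around the (sparse) set of prime powers, one computes $f(r)$ and $f(r-s)$ with no circular reference, which closes the induction and yields $f(n)=n^2$ for every $n$.
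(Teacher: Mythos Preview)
Your outline tracks the paper's proof through the easy cases (not a prime power; even via Goldbach; $n$ an odd prime via an auxiliary $q\in\{3,5\}$), and the base computations are fine. The genuine gap is in the last case $n=p^k$ with $p$ odd and $k\ge 2$, where your ``remedy'' is only a gesture. After writing $2p^k=r+s$ you need $r,s,r-s\in\mathcal{A}_f$; indeed $s<p^k$ since $p^k$ is not prime, but your reduction for $r$ does not close. Repeating the odd--prime trick with $q'\in\{3,5\}$ only yields a coprime splitting of $r+q'$ into factors $\le (r+q')/2\le p^k+2$, and the odd cofactor can be exactly $p^k$ (for instance $p^k=25$, $r=47$, $q'=3$ gives $r+q'=2\cdot 25$), which is circular. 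Moreover, in your odd--prime argument you disposed of $r-q'$ by the bare inequality $r-q'<r$, but here $r-q'$ typically still exceeds $p^k$, so that step no longer applies and $r-q'$ must also be factored --- and it may itself be a power of $2$. The paper's fix is to enlarge the auxiliary set to $\{3,5,7,17\}$ and choose $q'$ with $r+q'\equiv 4\pmod{8}$: then $(r+q')/4$ is an odd integer $\le N$ and $(r-q')/2$ is an odd integer $<N$, so multiplicativity puts both $r\pm q'$ into $\mathcal{A}_f$ and Lemma~\ref{lem33} gives $r\in\mathcal{A}_f$. The point is that forcing a factor of $4$ (not merely $2$) pushes the cofactor strictly below $N$ rather than merely below $2N$.

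You also leave $f(r-s)$ to the phrase ``steer around the prime powers'', but $r-s$ can be a pure power of $2$, where no coprime splitting into smaller factors exists. The paper treats this with a second Goldbach step: write $r-s=2^{e}=p_1+q_1$, note $q_1<N$, handle the larger prime $p_1$ by the same $\{3,5,7,17\}$ device, and observe that $p_1-q_1$ is either split by multiplicativity into factors $\le N$ or equals $2^{e_1}$ with $e_1<e$, hence $\le 2^{e-1}\le N$. These two missing ingredients --- the $\equiv 4\pmod{8}$ choice of auxiliary prime, and the nested Goldbach decomposition when $r-s$ is a power of $2$ --- are exactly the technical content your last paragraph waves at; supplying them turns your sketch into the paper's proof.
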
          

We briefly point out the main algebraic structural ingredient for this  problem.
Theorem \ref{main1}  insists  that
for a multiplicative arithmetic function $f$,
 a discriminantal subset $\mathcal{A}_f$ of $\mathbb{N}$
defined by
\begin{equation}    \label{setA}
\mathcal{A}_f := \{ n \in \mathbb{N} : f(n) = n^2   \}
\end{equation}
is the whole set $\mathbb{N}$.
The main issue on the induction process
is to argue whether $p \in \mathcal{A}_f$ and  $q \in \mathcal{A}_f$
imply $p+q \in \mathcal{A}_f$ or not.
In simpler cases, for example, the problem of the additive equation:
\[
f(p) + f(q) = f(p+q)
\]
has the property  $p+q \in \bar{\mathcal{A}}_f := \{ n \in \mathbb{N} : f(n) = n  \}$
if $p$ and $q$ are primes(see \cite{Spiro}).\footnote{In fact,
for the additive equation, primes are members of $\bar{\mathcal{A}}_f$.}
However this may not be the case for the problem of
 parallelogram equation   (\ref{eqn2}).
In fact, the main difficulty is to demonstrate
the fact that
 the subtraction $p-q$ (with $p \geq q$) is
in the member of  $\mathcal{A}_f$.
To get over this obstruction,
we  downsize the magnitude of the subtraction $p-q$ (see
the end of the proof of the main theorem  together with
Lemma \ref{lem33} and Lemma \ref{lem44} for the structure of the problem).

Throughout this paper, all numbers are positive integers.
For example, $m$, $n$, $p$, $q$, $r$, $p_1$, $q_1$, $a$, $b$, $N$, $\cdots$
always stand  for positive integers.

\section{Arguments}   \label{main}

We  present Theorem~\ref{main1} by using the Goldbach's conjecture.
In the following,   $f$ is a multiplicative arithmetic function
equipped with (\ref{eqn2}).

Since $f$ is a multiplicative function,
 we have $f(1)=1$, and also $f(0) = 0$ by
the constraint (\ref{cond-1}).
  It can be easily checked that
\begin{align*}
f(4)&= 4f(2)  \\
f(5)&= 2f(3)+2f(2)-1\\
f(6)&=  f(2)f(3)   \\
f(7)&= 3f(3)+6f(2)-2  \\
&\cdots,
\end{align*}
which require the  following lemma:
\begin{lemma}
We have $f(2)=4$ and $f(3) = 9$.
\end{lemma}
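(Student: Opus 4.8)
The plan is to regard $a := f(2)$ and $b := f(3)$ as the only unknowns and to collect enough polynomial identities between them---obtained by specializing~(\ref{eqn2}) to small prime pairs and invoking multiplicativity---to force $a = 4$ and $b = 9$. The relations already displayed supply $f(4) = 4a$, $f(5) = 2b + 2a - 1$, $f(6) = ab$, and $f(7) = 3b + 6a - 2$; from these I would extract three further identities.

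First, specializing~(\ref{eqn2}) to $p = q = 3$ (so that $p + q = 6$, $p - q = 0$) gives $f(6) = 4f(3) = 4b$, which combined with the multiplicative value $f(6) = ab$ yields the factorization $b\,(a - 4) = 0$. Second, I would evaluate $f(12)$ in two ways: the pair $p = 7$, $q = 5$ (with $p + q = 12$, $p - q = 2$) gives $f(12) = 2f(7) + 2f(5) - f(2) = 10b + 15a - 6$, while multiplicativity together with $f(4) = 4a$ gives $f(12) = f(3)f(4) = 4ab$; hence $4ab = 10b + 15a - 6$. Third, the pair $p = 7$, $q = 3$ (with $p + q = 10$, $p - q = 4$) gives $f(10) = 2f(7) + 2f(3) - f(4) = 8b + 8a - 4$, while $f(10) = f(2)f(5) = 2ab + 2a^2 - a$; hence $2ab + 2a^2 - a = 8b + 8a - 4$.

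It then remains to solve this small system. The factorization $b\,(a - 4) = 0$ splits the analysis into the branches $a = 4$ and $b = 0$. If $a = 4$, substituting into $4ab = 10b + 15a - 6$ gives $6b = 54$, i.e. $b = 9$, which finishes the argument. The only genuine obstacle is to discard the spurious branch $b = 0$: there the $f(12)$-identity forces $a = \tfrac{2}{5}$, whereas the $f(10)$-identity collapses to $2a^2 - 9a + 4 = 0$, whose roots are $4$ and $\tfrac{1}{2}$---a contradiction, so $b = 0$ is impossible. The computations themselves are purely mechanical; the one point that needs a little foresight is to choose prime pairs whose sums factor into coprime parts (here $12 = 3 \cdot 4$ and $10 = 2 \cdot 5$) so that multiplicativity can actually be applied to the value of $f$ at that sum.
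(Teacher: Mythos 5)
Your proof is correct and follows essentially the same route as the paper: both hinge on comparing $f(2p)=f(p+p)=4f(p)$ with the multiplicative value $f(2)f(p)$ to obtain the dichotomy $f(2)=4$ or $f(p)=0$, and both then pin down $f(3)$ via $f(12)=f(3)f(4)$. The only (immaterial) difference is in how the degenerate branch is excluded: the paper uses $f(5)=f(7)=0$ to extract the incompatible values $f(2)=\tfrac12$ and $f(2)=\tfrac13$, while you use the $f(12)$ and $f(10)$ identities to reach the same kind of contradiction.
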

\begin{proof}
Suppose that $f(2)\neq 4$.
Since $f$ is a multiplicative function and
satisfies the parallelogram equation (\ref{eqn2}), we have
$$
f(2)f(p)=f(2p)=f(p+p)=4f(p)
$$
for any odd prime $p$.
Then $f(p)=0$ for any odd prime $p\,$, which presents
that $f(3) = f(5)=0$. From the  equation that
$f(5)=2f(3)+2f(2)-1$,  we should have $f(2)=\frac{1}{2}$.
Also, $f(3) = f(7)=0$ and  the  fact that $0=f(7)=3f(3)+6f(2)-2$ imply
that $f(2)=\frac{1}{3}\,.$
 This is a contradiction. Thus we conclude that $f(2)=4$.

We also  have $f(3)=9$
by the the facts that  $f(4) =  4f(2) = 16$ and
$$
f(3)f(4)=f(12)=15f(2)+10f(3)-6=10f(3)+54
$$
by the multiplicativity  of $f$.
\end{proof}
The exact values of $f$ for small numbers can be obtained:
\begin{eqnarray*}
&&f(2)=2^2,\,f(3)=3^2,\, f(4)=4^2,\,f(5)=2f(3)+2f(2)-1=5^2, \\
&&f(6)=f(2)f(3)=6^2,\,f(7)=3f(3)+6f(2)-2=7^2,\, f(8)=8^2,\\
&&f(9)=4f(3)+12f(2)-3=9^2,\,f(10)=f(2)f(5)=10^2.
\end{eqnarray*}
In fact, we confirm that
\begin{lemma}\label{lem20}
For the natural number $n \leq 20$, one has $f(n)=n^2$.
\end{lemma}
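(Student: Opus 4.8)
The plan is to extend the explicit case-by-case verification already carried out for $n\le 10$, pushing it up through $n=20$ using only the two kinds of identities available: the multiplicativity relation $f(mn)=f(m)f(n)$ for coprime $m,n$, and the parallelogram relation $f(p+q)+f(p-q)=2f(p)+2f(q)$ for primes $p\ge q$. Since the lemma immediately preceding this one already establishes $f(n)=n^2$ for all $n\le 10$, only the eight values $n=11,12,\dots,20$ remain, and I would treat them in order so that each new value may be expressed in terms of values already known to equal $n^2$.

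First I would dispose of the composite numbers with two coprime factors already handled: $f(12)=f(3)f(4)$, $f(14)=f(2)f(7)$, $f(15)=f(3)f(5)$, $f(18)=f(2)f(9)$, and $f(20)=f(4)f(5)$, each of which is immediately $n^2$ by multiplicativity and the induction hypothesis $n\le 10$. (Indeed $f(12)=12^2$ was already recorded in the displayed computation above.) That leaves the ``hard'' values $n=11,13,16,17,19$, all of which are either prime or a prime power and hence invisible to multiplicativity. For these I would use the parallelogram equation with a well-chosen pair of primes summing to the target, or to a nearby composite whose other factors are already known. For instance $f(11)$ follows from $f(11)=f(7+5)$ paired with $f(7-5)=f(2)$, giving $f(11)+f(2)=2f(7)+2f(5)$; similarly $f(13)$ from the pair $(11,2)$ once $f(11)$ is in hand, i.e. $f(13)+f(9)=2f(11)+2f(2)$; $f(16)=4f(2)f(2)$? no—rather $f(16)$ via $f(13+3)$ paired with $f(13-3)=f(10)$; $f(17)$ via $f(13+? )$ or via $f(19-2)$ style pairings; and $f(19)$ via $f(17+2)$ paired with $f(17-2)=f(15)$. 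Each such identity, combined with the values already proven, pins down the unknown value uniquely, and a short arithmetic check confirms it equals $n^2$.

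The only genuine subtlety—and the step I would flag as the main obstacle—is bookkeeping: one must choose, for each of $11,13,16,17,19$, a pair of primes $p\ge q$ for which \emph{both} $p+q$ equals the target (or a composite with known cofactors) \emph{and} $p-q$ is already known, and verify that such a pair always exists within this small range. For $n\le 20$ this is a finite and easily checked claim—every relevant even number in range is a sum of two small primes with the smaller differences landing in $\{2,3,\dots,10\}\cup\{11,\dots\}$—so no conjecture is needed yet; the Goldbach hypothesis only enters later, in the general induction. I would therefore present the proof as a compact table of identities: for each $n$ from $11$ to $20$, one line giving the multiplicative or parallelogram identity used, and the resulting value $n^2$, citing the preceding lemma for all smaller arguments that appear. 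The verification is mechanical once the table is written down, so I would not belabor the arithmetic in the text.
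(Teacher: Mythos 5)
Your overall strategy is exactly the paper's: multiplicativity disposes of every $n\le 20$ admitting two coprime factors greater than $1$, and the remaining primes and prime powers are pinned down by substituting well-chosen prime pairs into the parallelogram equation so that only one unknown value appears. The paper's own proof is just a two-example sketch of this (it determines $f(11)$ and $f(14)$ from the substitution $p=11$, $q=3$, and $f(17)$ from $p=17$, $q=3$), so in spirit you are doing the same thing.

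However, your specific identity for $f(11)$ is wrong: you write ``$f(11)=f(7+5)$'' and deduce $f(11)+f(2)=2f(7)+2f(5)$, but $7+5=12$, so the substitution $p=7$, $q=5$ actually yields $f(12)+f(2)=2f(7)+2f(5)$ and says nothing about $f(11)$. This cannot be repaired by another choice of summands, because $11-2=9$ is composite, so $11$ is not a sum of two primes at all. The correct move --- the one the paper makes --- is to use $11$ as the \emph{larger} prime of the pair: $p=11$, $q=3$ gives $f(14)+f(8)=2f(11)+2f(3)$, where $f(14)=f(2)f(7)=196$ by multiplicativity and $f(8)=64$ comes from the earlier substitution $p=5$, $q=3$; this forces $f(11)=121$. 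The same defect affects your treatment of $17$: neither ``$13+4$'' ($4$ is not prime) nor a ``$19-2$'' pairing (which drags in the unknown $f(19)$ and $f(21)$) is a valid instance of the hypothesis, and $17$ is likewise not a sum of two primes; but $p=17$, $q=3$ gives $f(20)+f(14)=2f(17)+2f(3)$ with $f(20)=f(4)f(5)$ and $f(14)=f(2)f(7)$ both known, forcing $f(17)=289$. Your identities for $13$, $16$ and $19$ (from the pairs $(11,2)$, $(13,3)$ and $(17,2)$) are correct. So the method is right, but the table you propose must be assembled with care: for each target that is not itself expressible as $p+q$ with the difference already known, the target has to enter the equation as the larger prime $p$ (through the term $2f(p)$) rather than as the sum $p+q$, and you should exhibit those substitutions explicitly rather than assert their existence.
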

\begin{proof}
For example,  since $f$ is a multiplicative function,
we have
$$
f(2)f(7)=f(14)=2f(11)-46
$$
by plugging $p=11$ and $q=3$ in the equation~(\ref{eqn2})
to get
$$
f(11)=11^2 \mbox{\,\, and \,\,} f(14)=14^2.
$$
We also note that
\[
f(4) f(5) = f(20) = 2 f(17) + 2f(3) - f(14)
\]
and $f(14) = f(2)f(7)$ to get the exact  value of  $f(17)$.
\end{proof}

With the notation in (\ref{setA}), we have:
\begin{lemma}\label{lem33}
Let $p$ and $q$ be primes with $p \geq q\,.$
If three of four numbers $p+q\,,$ $p-q\,,$ $p$ and $q$ are elements in the set $\mathcal{A}_f$,
then the rest is contained in  $\mathcal{A}_f$.
\end{lemma}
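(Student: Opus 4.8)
The plan is to reduce the assertion to a one-line linear-algebra observation by subtracting off the expected solution. Set $g(n):=n^{2}$ and note that $g$ satisfies the parallelogram equation identically on $\mathbb{Z}$ — this is the classical identity $(p+q)^{2}+(p-q)^{2}=2p^{2}+2q^{2}$ — and moreover $g(0)=0$, consistently with the convention (\ref{cond-1}). Hence, writing $h:=f-g$ and extending by $h(0)=f(0)-0^{2}=0$ (using $f(0)=0$), the difference $h$ again solves the parallelogram equation on primes:
\[
h(p+q)+h(p-q)=2h(p)+2h(q)\qquad\text{for all primes } p\geq q .
\]

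Next I would translate membership in $\mathcal{A}_f$ into a vanishing statement for $h$: by the definition (\ref{setA}), for every $n\in\mathbb{N}$ one has $n\in\mathcal{A}_f$ if and only if $h(n)=0$. Now fix primes $p\geq q$. The displayed equation for $h$ is a single linear relation
\[
h(p+q)+h(p-q)-2h(p)-2h(q)=0
\]
among the four quantities $h(p+q),\,h(p-q),\,h(p),\,h(q)$, and each of its coefficients ($1$, $1$, $-2$, $-2$) is nonzero. Therefore, if any three of these four values vanish, the relation forces the fourth to vanish as well; equivalently, if three of $p+q$, $p-q$, $p$, $q$ lie in $\mathcal{A}_f$, then so does the remaining one. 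This is exactly the claim.

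The only point that merits a remark is the degenerate case $p=q$, in which $p-q=0$: here $h(0)=0$ automatically, so $0\in\mathcal{A}_f$ and the statement reduces to ``$p\in\mathcal{A}_f \iff 2p\in\mathcal{A}_f$'', which is in any event immediate from $f(2)=4$ and multiplicativity of $f$. I do not anticipate a genuine obstacle inside this lemma — its whole content is that the vanishing locus of $h$ obeys the ``three-implies-the-fourth'' rule dictated by (\ref{eqn2}). The real difficulty, as flagged in the introduction, lies in \emph{propagating} this closure property so as to reach $p-q\in\mathcal{A}_f$, which is where Lemma \ref{lem33} will be combined with the Goldbach-type input and the size-reduction argument of the subsequent lemmas.
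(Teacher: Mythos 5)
Your proof is correct and is essentially the paper's argument: the paper substitutes the three known values $n^2$ into the parallelogram equation and solves for the fourth, which is exactly your observation that $h=f-g$ satisfies a linear relation with nonzero coefficients. Subtracting off $g(n)=n^2$ first is just a cleaner packaging that handles all four cases uniformly where the paper writes ``the other cases can be dealt with similarly.''
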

For example, if $p, p+q$ and $p-q$ belong to
$\mathcal{A}_f$, then $q \in \mathcal{A}_f$.
\begin{proof}
Suppose that
$p, q$ and $p+q$ are in $\mathcal{A}_f$. Then
plugging the values
$f(p) = p^2$, $f(q) = q^2$ and $f(p+q) = (p+q)^2$
into the parallelogram equation
\[
f(p+q)+f(p-q)=2f(p)+2f(q),
\]
we have $f(p-q) = 2p^2 + 2q^2 -(p+q)^2 = (p-q)^2$.
Hence  $p-q \in \mathcal{A}_f$.
All  the other cases can be  dealt with similarly.
\end{proof}

We also observe:
\begin{lemma}\label{lem44}
Let $a$ and $b$ be relatively prime.
If two of $a$, $b$ and $a \times b$
are elements in the set $\mathcal{A}_f$,
then the other one is in  $\mathcal{A}_f$.
In particular, we have that
\begin{align} \label{fact-0}
a, b \in \mathcal{A}_f \; \; {\rm implies } \; \;
a \times b \in \mathcal{A}_f.
\end{align}
\end{lemma}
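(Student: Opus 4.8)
The plan is to reduce everything to the defining relation of $\mathcal{A}_f$ together with the multiplicativity of $f$. Since $a$ and $b$ are relatively prime, multiplicativity gives $f(a\times b)=f(a)\,f(b)$, while trivially $(a\times b)^2=a^2b^2$. Thus the single identity $f(a\times b)=f(a)f(b)$, compared with $a^2b^2$, ties together the three membership questions $a\in\mathcal{A}_f$, $b\in\mathcal{A}_f$, and $a\times b\in\mathcal{A}_f$, and each direction of the lemma is a one-line computation.

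First I would handle the case $a,b\in\mathcal{A}_f$: substituting $f(a)=a^2$ and $f(b)=b^2$ yields $f(a\times b)=a^2b^2=(a\times b)^2$, so $a\times b\in\mathcal{A}_f$; this already establishes the displayed assertion (\ref{fact-0}). Next, suppose $a\in\mathcal{A}_f$ and $a\times b\in\mathcal{A}_f$. Then $f(a)=a^2$ is a nonzero integer (as $a\geq 1$), so from $f(a)f(b)=f(a\times b)=(a\times b)^2=a^2b^2$ one may divide through by $a^2$ to get $f(b)=b^2$, i.e. $b\in\mathcal{A}_f$. The remaining case $b,a\times b\in\mathcal{A}_f$ is identical after interchanging the roles of $a$ and $b$.

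I do not expect any genuine obstacle here: the only point requiring even a trivial amount of care is the division step, which is legitimate precisely because membership in $\mathcal{A}_f$ forces the value of $f$ to be a positive square. The lemma is essentially a bookkeeping device, to be combined with Lemma \ref{lem33} in the inductive argument that drives the proof of Theorem \ref{main1}: together they let one propagate the property $f(n)=n^2$ across both products of coprime integers and sums/differences of primes.
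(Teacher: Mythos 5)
Your proof is correct and follows essentially the same route as the paper: apply multiplicativity to get $f(a\times b)=f(a)f(b)$, substitute the known squares, and divide by $a^2$ (nonzero) in the converse cases. The only difference is that you make the justification for the division step explicit, which the paper leaves implicit.
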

\begin{proof}
If $a$, $a \times b$ are in $\mathcal{A}_f$, then one has
\[
a^2 f(b) = f(a)f(b) = f(a \times b) = (a\times b)^2
\]
to get $f(b) = b^2$. If $a, b \in \mathcal{A}_f$, then
$f(a \times b) = f(a)f(b) = a^2 \times b^2$.
\end{proof}

\medskip

Based on Lemma \ref{lem20},  Lemma \ref{lem33} and  Lemma \ref{lem44},
we now present the proof of the main theorem.

\noindent{\it Proof of Theorem \ref{main1}. }
It suffices to show that $\mathcal{A}_f = \mathbb{N}$.
The mathematical induction is utilized to show that
$ \mathbb{N} \subset \mathcal{A}_f$.
Clearly, $1 \in  \mathcal{A}_f$.
We assume that $n \in \mathcal{A}_f$  for any $n$ with $n \leq N$.
We will show that $N+1 \in  \mathcal{A}_f$.
By Lemma~\ref{lem20}, it is enough to consider the case $N \geq 20$.   \\

We consider two cases separately: The first case is
that $N+1$ is a power of prime, that is,
\begin{align} \label{c-1}
N +1 = \bar{p}^e
\end{align}
for some prime $\bar{p}$ and natural number $e$. The other case is
that $N+1$ is not a power of  single prime, and hence $N+1$ can be written as
\begin{align}  \label{c-2}
 N+1 = a \times b
\end{align}
 for some coprime $a, b > 1$; that is,  $(a, b) = 1$.  \\

We will first clear up the easier case (\ref{c-2}):
$N+1 = a\times b$.
The facts $a \leq N$ and $b \leq N$
imply that $f(a) = a^2$ and $f(b) = b^2$, and so
by the multiplicativity  of $f$, we get
\[
f(N+1) = f(a)f(b)= (ab)^2 = (N+1)^2.
\]
Therefore we see that $N+1 \in \mathcal{A}_f$.  \\

We now consider the case (\ref{c-1}): $N+1 = \bar{p}^e$. We divide this case into two subcases: indeed,
either $\bar{p} = 2$ or $\bar{p}$ is an odd prime.

If $N+1 = 2^e$, then the Goldbach's conjecture yields that
there are two prime numbers $p$ and $q$ such that
$$
N+1=p+q
$$
with $p \geq q$.
Then
since $p$, $q$ and $p-q$ are  all less than equal to $N$,
we have that  $p, q, p-q \in \mathcal{A}_f$.
Hence
Lemma~\ref{lem33} leads to $N+1=p+q \in  \mathcal{A}_f$.  \\

Now we consider the case that
 $$
 N+1 = \bar{p}^e
 $$
where $\bar{p}$ is an odd prime.
We divide this case into the two subcases: either $e = 1$ or $e \neq 1$.
This  division is mainly to secure the strict inequality at (\ref{ineq-1}) below.
If $e = 1$, then  $N+1 = \bar{p}$ is a prime.
In this case, we select $q$ either $q = 3$ or  $q = 5$ satisfying
$$
(N+1)+q \equiv 2\, ({\rm mod } \; 4).
$$
Then we note that
by the choice of $q$, $2$ and  $\frac{N+1+q}{2}$ are relatively prime and so
we have that
\begin{equation}\label{eqn21}
f\left( N+1+q \right)
=
f(2)f\left(\frac{N+1 +q}{2}\right)
\end{equation}
by the multiplicativity  of $f$.
Since  $\frac{N+1 +q}{2} \leq N$ (because  $N \geq 6$),
we have
$$
\frac{N+1 +q}{2} \in \mathcal{A}_f.
$$
 Thus
$N+1 +q \in \mathcal{A}_f$ by Lemma \ref{lem44}.
Therefore we have that
\[
 N+1 +q, \;
 N+1 - q
 \; \;  {\rm and }  \; \;  q
\]
(because $N+1 - q \leq N$) are elements  in $\mathcal{A}_f$.
Hence
Lemma \ref{lem33} yields that
$$
N+1 \in \mathcal{A}_f.
$$

\medskip

It remains to consider the case $N +1  = \bar{p}^e$ with $e > 1$.
By  the Goldbach's conjecture,
there are prime numbers $p$ and $q$ such that
$$
2(N+1)=p+q
$$
and $p \geq q$.  We will demonstrate
\begin{equation}\label{check-point}
p, q, p-q \in \mathcal{A}_f
\end{equation}
to get $2(N+1) \in \mathcal{A}_f$
by Lemma \ref{lem33}. This directly implies the desired result:
$$
N+1 \in \mathcal{A}_f
$$
because
$N+1$ is  an odd number, and
\[
4 f(N+1) = f(2)f(N+1) = f(2(N+1))
 = (2(N+1))^2.
\]
This completes the proof.
Hence the remaining proof is devoted to verify the conditions (\ref{check-point}).  \\

First, we observe that $N+1$ is the arithmetic mean of $p$ and $q$ to present
\begin{align}  \label{ineq-00}
 q\leq N+1 \leq p.
\end{align}
From the fact that $N+1$ is not a prime number, (\ref{ineq-00}) says
\begin{align}  \label{ineq-1}
q < N+1,
\end{align}
which in turn implies $q \in \mathcal{A}_f$.  \\

We next show that $p \in \mathcal{A}_f$.
Since $p$ is an odd number,
we can choose a prime number  $r$
among $\{3,\,5,\,7,\,17\}$ to get
\begin{equation}\label{eqn22}
p+r \equiv 4\,({\rm mod } \; 8).
\end{equation}
 By the choice  of $r$ obeying  (\ref{eqn22}),
 the number  $4$ and the {\it integer} $\frac{p+r}{4}$
are relatively prime.
We also note that
$$
4N = 2(p+q-2) \geq p+r
$$
for $p \geq N+1 > 20$ to see  that the integer $\frac{p+r}{4}$ is in $\mathcal{A}_f$.
Hence
$p +r \in \mathcal{A}_f$ by Lemma \ref{lem44}.
The equation~(\ref{eqn22})  also implies that
$\frac{p-r}{2}$ is an odd integer, and
$$
\frac{p-r}{2}< N.
$$
Hence
$p - r \in \mathcal{A}_f$ by Lemma \ref{lem44}.
We have  shown that $p+r, p-r$ and $r$ are in $\mathcal{A}_f$.
Therefore $p \in \mathcal{A}_f$ by Lemma \ref{lem33}.  \\

It remains to check whether $p-q$ is in  $\mathcal{A}_f$  or not.
Since $p-q$ is even, we factorize $p-q$ as
$$
p-q = 2^e\,a\,\,\mbox{ where }\, e\geq 1\, \mbox{ and }\, 2 \nmid a\,.
$$
If $a \neq 1\,,$ then in view of $2^e\,a=p-q \leq 2N-1$,
 both $2^e$ and $a$
are less than or  equal to  $N$.
Hence,
by Lemma \ref{lem44}, we get $p-q \in \mathcal{A}_f$.

The remaining case is when $p-q = 2^e$.
By  the Goldbach's conjecture,
there are prime numbers $p_1$ and $q_1$ such that
$$
2^e=p_1+q_1
$$
 with  $p_1 \geq q_1$.
We will verify
\begin{equation}\label{cond-2}
p_1, q_1, p_1 - q_1   \mbox{ are  members of }   \mathcal{A}_f.
\end{equation}
From the fact that
$q_1 \leq \frac{p_1 + q_1}{2}  =  \frac{ p-q}{2}  \leq  N - \frac{1}{2}$,
we have $q_1 \in   \mathcal{A}_f$.
We next show that  $p_1 \in \mathcal{A}_f$.
 Since $p_1$ is an odd number, we can choose a prime number  $r$
among $\{3,\,5,\,7,\,17\}$ to get
\begin{equation}\label{eqn222}
p_1+r \equiv 4\,({\rm mod } \; 8).
\end{equation}
The same argument used above leads that
$$
\frac{p_1+r}{4} \leq N \,\,
\mbox{ and }\,\,
\left( 4,
\frac{p_1+r}{4}
\right) = 1
$$
to have $p_1 + r \in \mathcal{A}_f$ by Lemma \ref{lem44}.
Similarly, the  {\it integer} $\frac{p_1- r}{2}$ satisfies
$
\frac{p_1- r}{2} \leq N
$
 and
$
\left( 2,
\frac{p_1-r}{2}
\right) = 1
$
to get $p_1 - r \in \mathcal{A}_f$ by Lemma \ref{lem44}.
Since
 $p_1 +r$, $p_1 -r$ and $r$ are in $\mathcal{A}_f$, we find
 $p_1 \in \mathcal{A}_f$ by Lemma \ref{lem33}.

Finally, we claim that $p_1-q_1$ is in  $\mathcal{A}_f$.
Since $p_1-q_1$ is even, we factorize $p_1-q_1$ as
$$
 p_1-q_1=2^{e_1}\,a_1
$$
for some $e_1 \geq 1$ and $a_1$ with $2\nmid a_1$.
Similar to the previous case, we have that
if $a_1 \neq 1$, then $p_1-q_1 \in \mathcal{A}_f$.
 If $a_1=1\,,$ then
$$
 2^{e_1} = p_1-q_1 <   p_1 + q_1
 = 2^e \leq 2N-1,
$$
which implies $e_1 < e$. This says that  $p_1-q_1$ is less than
or  equal to
the half of  $p_1 + q_1$.
Therefore
 $p_1-q_1 \in \mathcal{A}_f$.
The conditions (\ref{cond-2})  are now verified.
Hence
 by Lemma~\ref{lem33},
$p-q=p_1+q_1 \in \mathcal{A}_f$ as desired.

All problems are now settled down.
\hfill$\Box$\par 

\bigskip

\bibliographystyle{amsalpha}

\end{document}